\newtheorem{thm}{Theorem}
\newtheorem{lemma}[thm]{Lemma}
\newcommand{\reals}{\mathbb{R}}
\newcommand{\complex}{\mathbb{C}}
\newcommand{\eps}{\varepsilon}
\newcommand{\supp}{\text{supp}}
\newcommand{\Real}{\text{Re}}
\newcommand{\Imag}{\text{Im}}
\newcommand{\Ffamily}{\mathcal{F}}
\newcommand{\Afamily}{\mathcal{A}}
\newcommand{\Laplace}{\mathcal{L}}
\begin{document}
\title{Characterizing compact families via the Laplace transform}
\author{Mateusz Krukowski}
\affil{\L\'od\'z University of Technology, Institute of Mathematics, \\ W\'ol\-cza\'n\-ska 215, \
90-924 \ \L\'od\'z, \ Poland}
\maketitle

\begin{abstract}
In 1985, Robert L. Pego characterized compact families in $L^2(\reals)$ in terms of the Fourier transform. It took nearly 30 years to realize that Pego's result can be proved in a wider setting of locally compact abelian groups (works of G\'orka and Kostrzewa). In the current paper, we argue that the Fourier transform is not the only integral transform that is efficient in characterizing compact families and suggest the Laplace transform as a possible alternative.   
\end{abstract}

\smallskip
\noindent 
\textbf{Keywords : } Laplace transform, Pego theorem, compactness\\
\vspace{0.2cm}
\\
\textbf{AMS Mathematics Subject Classification: }44A10 (primary), 42A38 (secondary)

\section{Introduction}

Characterizing compact families is a vital topic in function spaces' theory at least since the end of the 19-th century. Around 1883, two Italian mathematicians Cesare Arzel\`a (1847-1912) and Giulio Ascoli (1843-1896) provided the necessary and sufficient conditions under which every sequence of a given family of real-valued continuous functions (defined on a closed and bounded interval), has a uniformly convergent subsequence (this is called \textit{sequential compactness}). A couple of decades later (in 1931), Andrey Kolmogorov (1903-1987) succeeded in characterizing the compact families in $L^p(\reals^N),$ when $1 < p < \infty$ and all the functions are supported in a common bounded set (comp. \cite{Kolmogorov}). A year later, Jacob David Tamarkin (1888-1945) got rid of the second restriction (comp. \cite{Tamarkin}) and in 1933, Marcel Riesz (1886-1969), a younger brother of Frigyes Riesz, proved the general case for $L^p(\reals^N),$ where $1\leqslant p < \infty.$ In 1940, a French mathematician and one of the leaders of the Bourbaki group, Andr\'e Weil (1906-1998) wrote a book '\textit{L'int\'{e}gration dans les groupes topologique}' (comp. \cite{Weil}), in which he proved the Kolmogorov-Riesz theorem for a locally compact Hausdorff group $G$ instead of $\reals^N.$

The next major contribution came over 40 years later (1985), when Robert L. Pego characterized compact families in $L^2(\reals)$ via the Fourier transform. This innovative idea was the cornerstone for the works of two Polish mathematicians Przemys\l{}aw G\'orka and Tomasz Kostrzewa. In \cite{Gorka} and \cite{GorkaKostrzewa}, they proved that a counterpart of Pego theorem holds for locally compact abelian groups (this is reminiscent of Weil's contribution). Obviously, there are other works related to the topic, which are worth-mentioning: \cite{GorkaMacios}, \cite{GorkaMacios2}, \cite{GorkaRafeiro}, \cite{GorkaRafeiro2}, \cite{HancheOlsenHolden} or \cite{HancheOlsenHoldenMalinnikova} just to name a few. 

In the current paper, we argue that the Fourier transform is not the only one that can be used to characterize compact families. In Section \ref{prelims} we introduce the basic definitions and discuss the necessary notation. We also prove the fundamental theorems, which are very well-known in the context of the Fourier transform, and probably less known in the context of the Laplace transform. In Section \ref{mainresults} we prove the main results. Theorem \ref{LaplacePego}, which is a counterpart of the Pego's result, is the climax of the paper.

\section{Preliminary results}
\label{prelims}

Throughout the paper, by $\reals_+$ we understand the open set $(0,\infty)$ and $\complex$ stands for the field of complex numbers. For a measurable, complex-valued function $f:\reals_+\longrightarrow \complex$ and a real number $x\geqslant 0$ we denote 
$$f_x(t) = f(t)e^{-xt}.$$

\noindent
We say that $f:\reals_+\longrightarrow\complex$ is a \textit{Laplace-Pego function} of order $x\geqslant 0$ if
$$f_x \in L^1(\reals_+)\cap L^2(\reals_+).$$

\noindent
The norms in $L^1(\reals_+)$ and $L^2(\reals_+)$ are denoted by $\|\cdot\|_1$ and $\|\cdot\|_2$, respectively. 
Moreover, if $\Ffamily$ is a family of Laplace-Pego functions with a common order $x\geqslant 0$, then we denote
$$\Ffamily_x = \{f_x \ : \ f\in\Ffamily\}.$$

Let $f$ be a Laplace-Pego function of order $x\geqslant 0$. The \textit{Laplace transform} $\Laplace\{f\}$ of the function $f$ is defined by
$$\Laplace\{f\}(z) = \int_0^{\infty}\ f(t)e^{-zt}\ dt.$$

\noindent
A natural question arises: \textit{when does the above integral exist}? To answer this question, observe that if $\Real(z) \geqslant x$, then
\begin{gather*}
|\Laplace\{f\}(z)| = \left|\int_0^{\infty}\ f(t)e^{-\Real(z)t}e^{-i\Imag(z)t}\ dt\right| \leqslant \int_0^{\infty}\ |f(t)|e^{-\Real(z)t}\ dt \\
= \int_0^{\infty}\ |f(t)|e^{-xt}e^{(x-\Real(z))t}\ dt \leqslant \|f_{x}\|_1 < \infty.
\end{gather*}

\noindent
In other words, the Laplace transform $\Laplace\{f\}$ exists in the half-plane $\Real(z)\geqslant x$.

An important special case of the Laplace transform is the \textit{Fourier transform}, which we define by
$$\widehat{f}(y) = \Laplace\{f\}(2\pi iy).$$

Let us formulate a crucial theorem regarding the Laplace transform, which we will use multiple times throughout the paper:

\begin{thm}(Plancherel theorem for the Laplace transform)\\
If $f$ is a Laplace-Pego function of order $x\geqslant 0$, then
\begin{gather}
\frac{1}{2\pi}\ \int_{-\infty}^{\infty}\ |\Laplace\{f\}(x+iy)|^2\ dy = \int_0^{\infty}\ e^{-2xt} |f(t)|^2\ dt.
\label{Plancherel}
\end{gather}
\label{Planchereltheorem}
\end{thm}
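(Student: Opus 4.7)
The plan is to reduce the identity \eqref{Plancherel} to the classical Plancherel theorem for the Fourier transform on $\reals$. The crucial observation is that, along the vertical line $\Real(z)=x$, the Laplace transform of $f$ is literally the Fourier transform of a suitable extension of $f_x$ by zero.

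First I would define $g(t) := f_x(t)$ for $t>0$ and $g(t):=0$ for $t\leqslant 0$. By hypothesis $f_x\in L^1(\reals_+)\cap L^2(\reals_+)$, so the extended $g$ lies in $L^1(\reals)\cap L^2(\reals)$. Pulling the factor $e^{-xt}$ into $f$ inside the defining integral of the Laplace transform then gives
$$\Laplace\{f\}(x+iy) \;=\; \int_0^{\infty} f(t)e^{-xt}\,e^{-iyt}\,dt \;=\; \int_{-\infty}^{\infty} g(t)\,e^{-iyt}\,dt,$$
which identifies $\Laplace\{f\}(x+i\,\cdot\,)$ with the Fourier transform $\widetilde{g}$ of $g$ in the convention that places no $2\pi$ in the exponent.

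Second, I would invoke the classical Plancherel identity on $\reals$, which in this convention reads $\|\widetilde{g}\|_2^2 = 2\pi\,\|g\|_2^2$ for every $g\in L^1(\reals)\cap L^2(\reals)$. Substituting our specific $g$ and rewriting the right-hand norm as an integral over $(0,\infty)$ via the definition of $f_x$ gives
$$\int_{-\infty}^{\infty} |\Laplace\{f\}(x+iy)|^2\,dy \;=\; 2\pi \int_0^{\infty} e^{-2xt}|f(t)|^2\,dt,$$
and dividing by $2\pi$ yields \eqref{Plancherel}.

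The only genuine obstacle here is bookkeeping of Fourier normalizations. The paper's convention $\widehat{f}(y)=\Laplace\{f\}(2\pi iy)$ differs from the one that is most natural for the identity above, so one must be careful about where the factor $2\pi$ ends up and, correspondingly, which form of Plancherel is being cited. Once the extension-by-zero trick converts the problem into a Fourier-transform statement on the whole real line, the rest is a one-line application of a standard theorem, with no deeper analytic input required.
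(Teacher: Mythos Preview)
Your proposal is correct and follows essentially the same route as the paper: both arguments recognize that $\Laplace\{f\}(x+iy)$ is the Fourier transform of (the zero-extension of) $f_x$ and then invoke the classical Plancherel theorem, the only difference being that the paper uses the convention $\widehat{f_x}(\xi)=\int f_x(t)e^{-2\pi i\xi t}\,dt$ and a substitution $\xi=y/2\pi$, whereas you use the convention without $2\pi$ in the exponent and track the resulting factor directly.
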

\begin{proof}
At first, observe that 
\begin{equation}
\begin{split}
\forall_{y\in\reals}\ \Laplace\{f\}(x+iy) &= \int_0^{\infty}\ f(t)e^{-xt}e^{-iyt}\ dt \\
&= \int_0^{\infty}\ f(t)e^{-xt}e^{-2\pi i\frac{y}{2\pi}t}\ dt= \widehat{f_x}\left(\frac{y}{2\pi}\right) .
\end{split}
\label{Plancherelequality}
\end{equation}

\noindent
By the classical Plancherel theorem (Theorem 3.5.2 in \cite{Deitmar}, p. 53 or \mbox{Theorem 1.1} in \cite{SteinShakarchi}, p. 208) we have
\begin{gather*}
\int_{-\infty}^{\infty}\ \left|\widehat{f_x}\left(\frac{y}{2\pi}\right)\right|^2\ d\frac{y}{2\pi} = \int_{-\infty}^{\infty}\ |f_x(t)|^2\ dt = \int_0^{\infty}\ e^{-2xt}|f(t)|^2\ dt.
\end{gather*}

\noindent
Upon observing that 
$$\int_{-\infty}^{\infty}\ \left|\widehat{f_x}\left(\frac{y}{2\pi}\right)\right|^2\ d\frac{y}{2\pi} \stackrel{(\ref{Plancherelequality})}{=} \frac{1}{2\pi}\ \int_{-\infty}^{\infty}\ |\Laplace\{f\}(x+iy)|^2\ dy$$

\noindent
we conclude the proof. 
\end{proof}

The theorem, which we present below, is (again) a counterpart of a well-know result in the theory of Fourier transform:

\begin{thm}(Riemann-Lebesgue lemma for the Laplace transform)\\
If $f$ is a Laplace-Pego function of order $x\geqslant 0$, then
\begin{gather}
\lim_{y\rightarrow \pm\infty}\ \Laplace\{f\}(x+iy) = 0.
\label{RiemannLebesgue}
\end{gather}
\label{RiemannLebesguelemma}
\end{thm}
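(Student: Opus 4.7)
The plan is to bootstrap this statement directly from the classical Riemann-Lebesgue lemma for the Fourier transform on $\reals$, exploiting the very same identity that powered the proof of Theorem \ref{Planchereltheorem}. Equation (\ref{Plancherelequality}) already supplies
$$\Laplace\{f\}(x+iy) = \widehat{f_x}\left(\frac{y}{2\pi}\right),$$
so it suffices to show that the right-hand side tends to zero as $|y|\to\infty$.

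To do this, I would first observe that the Laplace-Pego hypothesis gives $f_x\in L^1(\reals_+)$; extending $f_x$ by zero to $(-\infty,0]$ produces a function in $L^1(\reals)$ whose Fourier transform on $\reals$ coincides with $\widehat{f_x}$ as used in the paper, since the defining integral is supported in $\reals_+$ regardless of whether we integrate over $\reals_+$ or $\reals$. I would then invoke the classical Riemann-Lebesgue lemma (cited, e.g., from \cite{SteinShakarchi} or \cite{Deitmar}), which guarantees that the Fourier transform of any $L^1(\reals)$ function vanishes at infinity. Replacing the Fourier variable by $y/(2\pi)$ and letting $y\to\pm\infty$ finishes the argument.

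There is essentially no genuine obstacle here: the theorem is a cosmetic rewriting of the classical lemma through the identity (\ref{Plancherelequality}). The only mild point worth emphasising in the write-up is that this result draws on the $L^1$ half of the Laplace-Pego condition, whereas Theorem \ref{Planchereltheorem} relied on the $L^2$ half; the two integrability requirements thus play complementary roles, and the present theorem is precisely where the $L^1$ assumption earns its keep.
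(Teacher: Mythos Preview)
Your argument is correct. By writing $\Laplace\{f\}(x+iy)=\widehat{f_x}(y/2\pi)$ and applying the classical Riemann--Lebesgue lemma directly to $f_x\in L^1(\reals)$, you obtain the conclusion in one stroke.

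The paper takes a slightly longer route: it first verifies (\ref{RiemannLebesgue}) by hand for indicator functions $\mathds{1}_{(a,b)}$ via the explicit antiderivative $e^{-(x+iy)t}/(x+iy)$, extends by linearity to simple functions, and only then passes to general $f$ by approximating $f_x$ in $L^1(\reals_+)$ by a simple function $g$ and controlling the error. In the final step the paper itself invokes the classical Riemann--Lebesgue lemma for $\widehat{g}$, so your shortcut is entirely legitimate---you are simply applying that lemma to $f_x$ rather than to an approximating simple function. What the paper's approach buys is a visible, self-contained computation for the elementary case (and perhaps a reminder of how the classical lemma is proved); what your approach buys is brevity and a cleaner logical dependence, since you never re-derive the simple-function case that the classical lemma already subsumes. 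Your closing remark that this theorem is where the $L^1$ half of the Laplace--Pego hypothesis is used, complementing the $L^2$ half used in Theorem~\ref{Planchereltheorem}, is a nice observation worth keeping.
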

\begin{proof}
At first, let $f = \mathds{1}_{(a,b)}$ where $(a,b)\subset \reals_+.$ Then
$$\forall_{y\in\reals}\ \Laplace\{f\}(x+iy) = \int_0^{\infty}\ \mathds{1}_{(a,b)}(t)e^{-(x+iy)t}\ dt = \int_a^b\ e^{-(x+iy)t}\ dt = \frac{e^{-(x+iy)a} - e^{-(x+iy)b}}{x+iy},$$

\noindent
so (\ref{RiemannLebesgue}) holds. By linearity of the Laplace transform, the result is also true for all simple functions.

Finally, let $f$ be an arbitrary Laplace-Pego function and let $\eps> 0$. Since simple functions are dense in $L^1(\reals_+)$ (comp. Proposition 6.7 in \cite{FollandRealAnalysis}, p. 183) there exists a simple function $g$ such that
\begin{gather}
\int_0^{\infty}\ |f(t)e^{-xt} - g(t)|\ dt < \eps.
\label{simplefunctiongdef}
\end{gather}

\noindent
Hence
\begin{gather*}
\lim_{y\rightarrow \pm \infty}\ |\Laplace\{f\}(x+iy)| = \lim_{y\rightarrow \pm \infty}\ \left|\int_0^{\infty}\ f(t)e^{-xt}e^{-iyt}\ dt\right| \\
\leqslant \int_0^{\infty}\ \left|f(t)e^{-xt} - g(t)\right|\ dt + \lim_{y\rightarrow \pm \infty}\ \left|\int_0^{\infty}\ g(t)e^{-iyt}\ dt\right| \\
\stackrel{(\ref{simplefunctiongdef})}{<} \eps  + \lim_{y\rightarrow \pm \infty}\ |\Laplace\{g\}(iy)| = \eps + \lim_{y\rightarrow\pm \infty}\ \left|\widehat{g}\left(\frac{y}{2\pi}\right)\right| = \eps,
\end{gather*}

\noindent
where the last equality follows from the classical Riemann-Lebesgue lemma for the Fourier transform (comp. Theorem 1.7 in \cite{Katznelson}, p. 136). Since $\eps >0$ was chosen arbitrarily, we conclude the proof.
\end{proof}

We will now recall the prominent fact that the Laplace trasnform 'changes the convolution of two functions to multiplication'. A convolution of two Laplace-Pego functions $f,\ g$ with a common order $x\geqslant 0$ is defined by 
$$\forall_{t > 0}\ f\star g(t) = \int_0^t\ f(s)g(t-s)\ ds.$$

\noindent
In order to prove that the convolution is well-defined, let us invoke a general version of Tonelli's theorem (comp. \mbox{Theorem B.3.3} in \cite{DeitmarEchterhoff}, p. 289):

\begin{thm}(Tonelli's theorem)\\
Let $(X,\mu_X),\ (Y,\mu_Y)$ be two measure spaces and let $F : X\times Y\longrightarrow \complex$ be a measurable function such that 
$$\big\{(x,y)\in X\times Y\ : \ F(x,y)\neq 0\big\}$$ 

\noindent
is $\sigma-$finite. If one of the integrals 
$$\int_X\ \int_Y\ |F(x,y)|\ d\mu_X(x)\ d\mu_Y(y) \hspace{0.4cm}\text{or}\hspace{0.4cm} \int_X\ \int_Y\ |F(x,y)|\ d\mu_X(y)\ d\mu_Y(x)$$

\noindent
is finite, then
$$\int_X\ \int_Y\ F(x,y)\ d\mu_X(x)\ d\mu_Y(y) = \int_X\ \int_Y\ F(x,y)\ d\mu_Y(y)\ d\mu_X(x).$$
\end{thm}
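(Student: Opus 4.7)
The plan is to reduce the claim to the classical Fubini--Tonelli theorem on $\sigma$-finite measure spaces by exploiting that only the support of $F$ contributes to any of the integrals. Set $S := \{(x,y)\in X\times Y\ :\ F(x,y)\neq 0\}$, which by hypothesis is $\sigma$-finite, so $S = \bigcup_n S_n$ with $S_n$ measurable and $(\mu_X\otimes \mu_Y)(S_n) < \infty$. By the very construction of the product measure (as an outer measure generated by measurable rectangles of finite measure), each $S_n$ can be covered by a countable union of rectangles $A_n^k \times B_n^k$ with $\mu_X(A_n^k),\ \mu_Y(B_n^k) < \infty$. Putting $A := \bigcup_{n,k} A_n^k$ and $B := \bigcup_{n,k} B_n^k$, we obtain $\sigma$-finite measurable subsets of $X$ and $Y$, respectively, outside of which $F$ vanishes.

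Once this reduction is in place, I would first handle $|F|$ on the $\sigma$-finite product $(A,\mu_X|_A)\times (B,\mu_Y|_B)$, where the textbook Tonelli theorem for non-negative measurable functions yields
$$\int_A\ \int_B\ |F(x,y)|\ d\mu_Y(y)\ d\mu_X(x) = \int_B\ \int_A\ |F(x,y)|\ d\mu_X(x)\ d\mu_Y(y).$$
Since $F$ vanishes off $A\times B$, both iterated integrals coincide with the corresponding iterated integrals over $X\times Y$. The assumption that one of these is finite then forces $F \in L^1(\mu_X|_A\otimes \mu_Y|_B)$, and the classical Fubini theorem applied on this $\sigma$-finite product delivers the analogous identity for $F$ itself (no absolute value), which once again lifts back to $X$ and $Y$ without any change in value.

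The step I expect to be most delicate is the covering argument in the first paragraph --- namely, the fact that a $\sigma$-finite set in the product $\sigma$-algebra can always be swallowed by a product $A\times B$ of $\sigma$-finite factors. This is a routine consequence of the approximation properties of the product outer measure, yet it is precisely the ingredient that allows one to bypass the failure of global $\sigma$-finiteness of $(X,\mu_X)$ and $(Y,\mu_Y)$ and thereby pivot to the standard $\sigma$-finite Fubini--Tonelli framework.
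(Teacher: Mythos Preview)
The paper does not prove this statement: it is simply quoted (with attribution to Theorem~B.3.3 in \cite{DeitmarEchterhoff}) and then applied as a black box in the $\sigma$-finite setting $\reals_+\times\reals_+$, where the extra hypothesis on the support is automatic. So there is no in-paper argument to compare yours against.

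Your reduction strategy---restrict to a $\sigma$-finite sub-product $A\times B$ containing the support of $F$ and invoke the classical $\sigma$-finite Fubini--Tonelli there---is the standard one and is essentially how the result is obtained in the cited reference. The step you single out as delicate is indeed the only nontrivial one, and your justification of it is slightly too quick. In the outer-measure covering of a finite-measure piece $S_n$ by rectangles $A_n^k\times B_n^k$, a term $\mu_X(A_n^k)\,\mu_Y(B_n^k)<\infty$ does not force both factors to be finite: one side may have measure $0$ while the other has measure $\infty$. Such a rectangle contributes nothing to the sum yet may still be needed to cover part of $S_n$, so one cannot simply read off $\sigma$-finite $A$ and $B$ from an arbitrary covering. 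The clean fix is either to \emph{define} the product outer measure using only rectangles with sides of finite measure (this is legitimate and yields the same measure on $\sigma$-finite sets), or to observe that the union of the ``bad'' $0\times\infty$ rectangles is a product-null set $N$, so $S_n\setminus N$ sits inside a product of $\sigma$-finite factors; one then checks separately that the iterated integrals over $N$ vanish. Either route closes the argument, but a sentence to this effect is warranted---it is exactly the place where the non-$\sigma$-finite hypothesis on $X$ and $Y$ bites.
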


In our case, both $X$ and $Y$ are the space $\reals_+$ and both measures $\mu_X$ and $\mu_Y$ are the standard Lebesgue measure. Consequently, the assumption of 
$$\big\{(x,y)\in\reals_+\times\reals_+\ :\ F(x,y)\neq 0\big\}$$ 

\noindent
being $\sigma-$finite becomes obsolete, since $\reals_+\times\reals_+$ is $\sigma-$finite.

\begin{thm}
If $f,\ g$ are Laplace-Pego functions with a common order $x\geqslant 0,$ then $(f\star g)_x \in L^1(\reals_+)$. In particular, it exists almost everywhere.
\end{thm}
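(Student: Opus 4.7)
The plan is to prove the quantitative bound $\|(f\star g)_x\|_1 \leqslant \|f_x\|_1\cdot\|g_x\|_1$, which yields both assertions of the theorem at once: membership in $L^1(\reals_+)$ is immediate, and finiteness of an $L^1$ integrand automatically forces the defining convolution integral to be finite almost everywhere, which is precisely what the ``in particular'' clause claims.

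The observation driving everything is that the weighted convolution coincides with the convolution of the weighted factors. Writing $e^{-xt} = e^{-xs}\cdot e^{-x(t-s)}$ inside the definition, a one-line computation yields
$$(f\star g)_x(t) \;=\; e^{-xt}\int_0^t f(s)\, g(t-s)\, ds \;=\; \int_0^t f_x(s)\, g_x(t-s)\, ds,$$
so the problem reduces to the familiar statement that the (half-line) convolution of two $L^1(\reals_+)$ functions is again in $L^1(\reals_+)$.

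To carry this reduction out rigorously, I would apply Tonelli's theorem (as quoted just above) to the nonnegative function
$$F(s,t) \;=\; |f_x(s)|\cdot |g_x(t-s)|\cdot \mathds{1}_{\{0<s<t\}}(s,t)$$
on $\reals_+\times \reals_+$. The $\sigma$--finiteness hypothesis is automatic, as already noted in the excerpt, and measurability of $F$ follows from the measurability of $f_x$ and $g_x$ together with the continuity of $(s,t)\mapsto (s,t-s)$. Integrating $F$ first over $t$ and then substituting $u=t-s$ in the inner integral delivers $\|f_x\|_1\cdot\|g_x\|_1$, which is finite by the Laplace--Pego hypothesis on $f$ and $g$. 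I do not expect a real obstacle here; the only point requiring a little care is the clean verification of the Tonelli hypotheses, after which the bound falls out in two lines and the almost-everywhere existence is a corollary of the $L^1$ membership.
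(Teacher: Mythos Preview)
Your proposal is correct and follows essentially the same route as the paper: the identity $(f\star g)_x = f_x\star g_x$ via $e^{-xt}=e^{-xs}e^{-x(t-s)}$, followed by Tonelli applied to the nonnegative integrand to obtain $\|(f\star g)_x\|_1\leqslant\|f_x\|_1\|g_x\|_1$. The paper's presentation differs only cosmetically (it writes the iterated integral with $ds$ inner and then swaps, rather than integrating in $t$ first), and your explicit remark that almost-everywhere existence follows from $L^1$-membership is a welcome clarification that the paper leaves implicit.
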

\begin{proof}
At first, let us observe that
\begin{equation}
\begin{split}
\forall_{t > 0}\ e^{-xt} f\star g(t) &= \int_0^t\ e^{-xt}f(s)g(t-s)\ ds \\
& = \int_0^t\ e^{-xs}f(s)e^{-x(t-s)}g(t-s)\ ds.
\end{split}
\label{convoexp}
\end{equation}

\noindent
Furthermore, by Proposition 3.9, p. 86 in \cite{SteinShakarchi}, we note that the function \mbox{$F:\reals_+\times\reals_+\longrightarrow \complex$} defined by
$$F(t,s) = e^{-xs}f(s)e^{-x(t-s)}g(t-s)$$

\noindent
is measurable, so we are in position to apply Tonelli's theorem. Consequently, we obtain
\begin{equation*}
\begin{split}
\bigg|\int_0^{\infty}\ e^{-xt}f\star g(t)\ dt\bigg| \stackrel{(\ref{convoexp})}{\leqslant}& \int_0^{\infty}\ \int_0^t\ e^{-xs}|f|(s)e^{-x(t-s)}|g|(t-s)\ ds\ dt \\
\stackrel{\text{Tonelli's thm}}{=}& \int_0^{\infty}\ \int_s^{\infty}\ e^{-xs}|f|(s)e^{-x(t-s)}|g|(t-s)\ dt\ ds = \|f_x\|_1\ \|g_x\|_1 < \infty,
\end{split}
\end{equation*}

\noindent
which ends the proof.
\end{proof}

\begin{thm}(convolution theorem for the Laplace transform, comp. Theorem 2.39 in \cite{Schiff}, p. 92)\\
If $f$ and $g$ are Laplace-Pego functions with a common order $x\geqslant 0,$ then 
$$\Laplace\{f\star g\}(z) = \Laplace\{f\}(z)\cdot \Laplace\{g\}(z)$$

\noindent
for $\Real(z)\geqslant x$.
\label{convolutiontheorem}
\end{thm}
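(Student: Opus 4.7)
The plan is to write $\Laplace\{f \star g\}(z)$ as a double integral, swap the order of integration via Fubini's theorem, and then perform the substitution $u = t - s$ in the inner integral so the two variables decouple and the product of Laplace transforms appears. First I would unfold the definitions to obtain
$$\Laplace\{f\star g\}(z) = \int_0^{\infty}\ e^{-zt}\int_0^t\ f(s)g(t-s)\ ds\ dt = \int_0^{\infty}\ \int_0^t\ f(s)g(t-s)e^{-zt}\ ds\ dt,$$
where the integrand is measurable by the same reasoning as in the previous theorem (Proposition 3.9 of \cite{SteinShakarchi}).

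Next I would justify the use of Fubini's theorem for this complex-valued integrand. For $\Real(z)\geqslant x$ we have
$$|f(s)g(t-s)e^{-zt}| = |f(s)|\,|g(t-s)|\,e^{-\Real(z)t} \leqslant e^{-xs}|f(s)|\,e^{-x(t-s)}|g(t-s)|,$$
and the previous theorem (together with Tonelli's theorem, as used in its proof) shows that the double integral of the right-hand side is finite and equals $\|f_x\|_1\,\|g_x\|_1$. Hence Fubini is applicable and the order of integration can be switched.

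After switching, I would write $e^{-zt} = e^{-zs}e^{-z(t-s)}$ and substitute $u = t - s$ in the inner integral:
\begin{gather*}
\Laplace\{f\star g\}(z) = \int_0^{\infty}\ \int_s^{\infty}\ f(s)g(t-s)e^{-zs}e^{-z(t-s)}\ dt\ ds \\
= \int_0^{\infty}\ f(s)e^{-zs}\ ds\ \cdot\ \int_0^{\infty}\ g(u)e^{-zu}\ du = \Laplace\{f\}(z)\cdot\Laplace\{g\}(z),
\end{gather*}
which is the desired identity.

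The main obstacle, modest as it is, is the careful verification of the absolute integrability hypothesis of Fubini's theorem in the half-plane $\Real(z)\geqslant x$. The cleanest way to handle this is to observe that the dominating function $e^{-xs}|f(s)|\,e^{-x(t-s)}|g(t-s)|$ coincides with the integrand already analyzed via Tonelli's theorem in the preceding proof, so finiteness of its double integral is not a new computation but a direct reference. Once Fubini is in hand, the remaining steps are a routine substitution.
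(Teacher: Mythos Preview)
Your proof is correct and follows essentially the same route as the paper's: unfold the definitions, check measurability of $F(t,s)=f(s)g(t-s)e^{-zt}$, swap the order of integration, and decouple the variables via $e^{-zt}=e^{-zs}e^{-z(t-s)}$. If anything, you are slightly more careful than the paper in distinguishing Fubini from Tonelli---the paper invokes ``Tonelli's theorem'' on a complex-valued integrand, whereas you explicitly bound the modulus and cite the previous theorem's computation to verify the absolute-integrability hypothesis needed for Fubini.
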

\begin{proof}
By Proposition 3.9, p. 86 in \cite{SteinShakarchi}, we note that the function \mbox{$F:\reals_+\times\reals_+\longrightarrow\complex$} defined by
$$F(t,s) = f(s)g(t-s)e^{-zt}$$

\noindent
is measurable, so we are in position to apply Tonelli's theorem. We have
\begin{equation*}
\begin{split}
\Laplace\{f\star g\}(z) &= \int_0^{\infty}\ f\star g(t)e^{-zt}\ dt = \int_0^{\infty}\ \int_0^t\ f(s)g(t-s)e^{-zt}\ ds\ dt \\
&\stackrel{\text{Tonelli's thm}}{=} \int_0^{\infty}\ \int_s^{\infty}\ f(s)g(t-s)e^{-zt}\ dt\ ds = \Laplace\{f\}(z)\cdot \Laplace\{g\}(z),
\end{split}
\end{equation*}

\noindent 
which ends the proof. 
\end{proof}

\section{Main results}
\label{mainresults}

A family $\Ffamily$ of Laplace-Pego functions with a common order $x\geqslant 0$ is said to be \textit{exponentially $L^2-$equivanishing} at $x$, if the family $\Ffamily_x$ is $L^2-$equivanishing, i.e.
\begin{gather}
\forall_{\eps >0}\ \exists_{T>0}\ \forall_{f\in\Ffamily}\ \int_T^{\infty}\ e^{-2xt}|f(t)|^2\ dt < \eps.
\label{expoL2equivanishing}
\end{gather}

\noindent
Furthermore, we say that a family $\Ffamily$ is \textit{Laplace equicontinuous} at $x$, if
\begin{gather}
\forall_{\eps > 0}\ \exists_{\delta >0}\ \forall_{f\in\Ffamily}\ \frac{1}{2\pi}\ \int_{-\infty}^{\infty}\ |\Laplace\{f\}(x+iy+\delta) - \Laplace\{f\}(x+iy)|^2\ dy < \eps.
\label{Laplaceequicontinuity}
\end{gather}

We will now relate the concepts of Laplace equicontinuity and exponential $L^2-$equivanishing. 

\begin{thm}
Let $\Ffamily$ be a Laplace-Pego family with a common order $x\geqslant 0.$ If $\Ffamily$ is Laplace equicontinuous at $x$, then it is exponentially $L^2-$equivanishing at $x$. Furthermore, if $\Ffamily_x$ is $L^2-$bounded, then the implication can be reversed. 
\label{LaplaceequicontinuityexpoL2equvanishing}
\end{thm}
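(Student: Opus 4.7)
The plan is to use Theorem~\ref{Planchereltheorem} to transport the Laplace equicontinuity condition from the imaginary line back to the positive real axis, and then argue on the time side with elementary splittings. For a fixed $\delta>0$, set $h(t):=f(t)(e^{-\delta t}-1)$. Since $|e^{-\delta t}-1|\leqslant 1$ for $t\geqslant 0$, we have $|h_x|\leqslant |f_x|$ pointwise, so $h$ is itself a Laplace-Pego function of order $x$. A direct computation gives $\Laplace\{h\}(x+iy)=\Laplace\{f\}(x+\delta+iy)-\Laplace\{f\}(x+iy)$, and identity (\ref{Plancherel}) applied to $h$ therefore recasts Laplace equicontinuity (\ref{Laplaceequicontinuity}) as
$$\forall_{\eps>0}\ \exists_{\delta>0}\ \forall_{f\in\Ffamily}\ \int_0^{\infty}\ e^{-2xt}|f(t)|^2(1-e^{-\delta t})^2\ dt<\eps.$$

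For the forward implication I would exploit the monotone pointwise bound $1-e^{-\delta t}\geqslant 1-e^{-\delta T}$ for $t\geqslant T$. Given $\eps'>0$, apply Laplace equicontinuity with $\eps:=\eps'/4$ to produce a $\delta>0$, and then set $T:=(\ln 2)/\delta$ so that $(1-e^{-\delta T})^2=1/4$. Restricting the rewritten integral to $[T,\infty)$ and dividing through by $1/4$ yields $\int_T^{\infty}e^{-2xt}|f(t)|^2\,dt<\eps'$ uniformly in $f\in\Ffamily$, which is exactly (\ref{expoL2equivanishing}).

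For the reverse implication, assume exponential $L^2$-equivanishing together with a uniform $L^2$-bound $\|f_x\|_2\leqslant M$ for all $f\in\Ffamily$. I would split $\int_0^{\infty}=\int_0^T+\int_T^{\infty}$ and estimate the two pieces with different bounds on $(1-e^{-\delta t})^2$. On $[0,T]$, the elementary inequality $1-e^{-\delta t}\leqslant \delta t\leqslant \delta T$ combined with $L^2$-boundedness gives $\int_0^T\leqslant \delta^2T^2M^2$. On $[T,\infty)$, the trivial bound $(1-e^{-\delta t})^2\leqslant 1$ combined with the equivanishing hypothesis controls the tail. The order of quantifiers is essential: given $\eps>0$, first pick $T$ from equivanishing (to force the tail below $\eps/2$ uniformly in $f$), and only then pick $\delta>0$ small enough so that $\delta^2T^2M^2<\eps/2$.

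The only conceptually non-routine step is the Plancherel reformulation in the first paragraph; once the condition has been moved to the time side, both implications reduce to bookkeeping with the inequalities $1-e^{-\delta t}\leqslant \delta t$ and $1-e^{-\delta t}\geqslant 1-e^{-\delta T}$. The role of the $L^2$-boundedness assumption in the converse is also visible from this reformulation: without it, the head $\int_0^T$ cannot be controlled uniformly in $f$, even for arbitrarily small $\delta$.
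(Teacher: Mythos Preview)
Your proof is correct and follows essentially the same route as the paper: both arguments use Theorem~\ref{Planchereltheorem} to rewrite the Laplace equicontinuity condition as $\int_0^\infty e^{-2xt}|f(t)|^2(1-e^{-\delta t})^2\,dt<\eps$, and then split at a threshold $T$ with the order of quantifiers exactly as you describe. The only cosmetic difference is that in the converse the paper bounds the head via $1-e^{-\delta t}\leqslant 1-e^{-\delta T}$ for $t\in[0,T]$ rather than your $1-e^{-\delta t}\leqslant\delta T$, which is an equivalent bookkeeping choice.
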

\begin{proof}
We divide the proof into two parts:

\vspace{0.3cm}
\noindent
\textbf{Part 1.}

At first, we assume that $\Ffamily$ is Laplace equicontinuous at $x$, so for a fixed $\eps > 0$ we may choose $\delta > 0$ according to (\ref{Laplaceequicontinuity}). Let $T>0$ be such that 
\begin{gather}
\left|e^{-\delta T} - 1\right|^2 \geqslant \frac{1}{2}.
\label{choiceofT}
\end{gather}

\noindent
Consequently, for every $f\in\Ffamily$ we obtain
\begin{gather*}
\eps > \frac{1}{2\pi}\ \int_{-\infty}^{\infty}\ |\Laplace\{f\}(x+iy+\delta) - \Laplace\{f\}(x+iy)|^2\ dy \\
= \frac{1}{2\pi}\ \int_{-\infty}^{\infty}\ \left| \int_0^{\infty}\ f(t)\left(e^{-\delta t} - 1\right)e^{-(x+iy)t}\ dt \right|^2\ dy\\
\stackrel{\text{Theorem } \ref{Planchereltheorem}}{=} \int_0^T\ e^{-2xt}|f(t)|^2 \left|e^{-\delta t} - 1\right|^2\ dt + \int_T^{\infty}\ e^{-2xt}|f(t)|^2 \left|e^{-\delta t} - 1\right|^2\ dt\\
\stackrel{(\ref{choiceofT})}{\geqslant} \frac{1}{2} \int_T^{\infty}\ e^{-2xt} |f(t)|^2\ dt,
\end{gather*} 

\noindent
which ends the first part of the proof.

\vspace{0.3cm}
\noindent
\textbf{Part 2.}

At this point, we assume that $\Ffamily_x$ is $L^2-$bounded, so there exists $M>0$ such that 
\begin{gather*}
\forall_{f\in\Ffamily}\ \int_0^{\infty}\ e^{-2xt} |f(t)|^2\ dt < M.
\end{gather*}

\noindent
We will show that if $\Ffamily$ is exponentially $L^2-$equivanishing at $x$, then it is Laplace equicontinuous at $x$.

Fix $\eps>0$ and choose $T>0$ as in the definition of the exponential $L^2-$equivanishing (\ref{expoL2equivanishing}). Let $\delta>0$ be such that 
\begin{gather}
\forall_{f\in\Ffamily}\ \left|e^{-\delta T} - 1\right|^2 M < \eps. 
\label{choiceofdelta}
\end{gather}

\noindent
We have
\begin{gather*}
\frac{1}{2\pi}\ \int_{-\infty}^{\infty}\ |\Laplace\{f\}(x+iy+\delta) - \Laplace\{f\}(x+iy)|^2\ dy \\
\stackrel{\text{Theorem } \ref{Planchereltheorem}}{=} \int_0^T\ e^{-2xt}|f(t)|^2 \left|e^{-\delta t} - 1\right|^2\ dt + \int_T^{\infty}\ e^{-2xt}|f(t)|^2 \left|e^{-\delta t} - 1\right|^2\ dt\\
\stackrel{(\ref{choiceofdelta})}{\leqslant} \left|e^{-\delta T} - 1\right|^2 M + \int_T^{\infty}\ e^{-2xt} |f(t)|^2\ dt \stackrel{(\ref{expoL2equivanishing})}{<} 2\eps,
\end{gather*} 

\noindent
which ends the proof. 
\end{proof}

A family $\Ffamily$ of Laplace-Pego functions with a common order $x\geqslant 0$ is said to be \textit{exponentially $L^2-$equicontinuous} at $x$, if
\begin{gather}
\forall_{\eps>0}\ \exists_{\delta > 0}\ \forall_{\substack{s\in (0,\delta)\\ f\in\Ffamily}}\ \left(\int_0^{\infty}\ e^{-2xt}|f(t) - f(t-s)|^2\ dt\right)^{\frac{1}{2}} < \eps.
\label{expL2equicontinuity}
\end{gather}

\noindent
Furthermore, we say that a family $\Ffamily$ is \textit{Laplace equivanishing} at $x$, if
\begin{gather}
\forall_{\eps>0}\ \exists_{T>0}\ \forall_{f\in\Ffamily}\ \int_{\reals\backslash[-T,T]}\ |\Laplace\{f\}(x+iy)|^2\ dy < \eps. 
\label{Laplaceequivanishing}
\end{gather}

We study the relationship between the novel notion of the exponential $L^2-$equicontinuity of $\Ffamily$ and the classical equicontinuity of $\Ffamily_x$ in the lemma below: 

\begin{lemma}
Let $\Ffamily$ be a family of Laplace-Pego functions with a common order $x\geqslant 0$. If $\Ffamily_x$ is $L^2-$bounded then $\Ffamily$ is exponentially $L^2-$equicontinuous at $x$ if and only if $\Ffamily_x$ is $L^2-$equicontinuous, i.e.
\begin{gather}
\forall_{\eps>0}\ \exists_{\delta > 0}\ \forall_{\substack{s\in(0,\delta),\\ f\in\Ffamily}}\ \int_0^{\infty}\ \left|e^{-x(t+s)}f(t+s) - e^{-xt}f(t)\right|^2\ dt < \eps.
\label{L2equicont}
\end{gather}
\label{expoequiandequicontinuity}
\end{lemma}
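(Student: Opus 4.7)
The plan is to pass to $g := f_x$ and reformulate both conditions in terms of $g$. Using the convention $f(t)=0$ for $t \leq 0$ and the identity $g(t) = e^{-xt}f(t)$, a direct computation (splitting the integral at $s$) yields
\begin{align*}
\int_0^\infty e^{-2xt}|f(t)-f(t-s)|^2\, dt &= \int_0^s |g(t)|^2\, dt + \int_s^\infty |g(t) - e^{-xs} g(t-s)|^2\, dt, \\
\int_0^\infty \bigl|e^{-x(t+s)}f(t+s) - e^{-xt}f(t)\bigr|^2\, dt &= \int_s^\infty |g(u) - g(u-s)|^2\, du.
\end{align*}
Thus the two conditions differ in the boundary term $\int_0^s|g|^2\,dt$ (present only on the exponential side) and in the weight factor $e^{-xs}$ attached to the translate.

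For the forward direction, I would start from the pointwise bound $|g(u)-g(u-s)| \leq |g(u) - e^{-xs}g(u-s)| + (1-e^{-xs})|g(u-s)|$ and apply Minkowski in $L^2((s,\infty))$ to obtain
$$\bigl\|g(\cdot) - g(\cdot-s)\bigr\|_{L^2(s,\infty)} \leq \bigl\|g(\cdot) - e^{-xs}g(\cdot-s)\bigr\|_{L^2(s,\infty)} + (1-e^{-xs})\sqrt{M},$$
where the first summand is controlled by the exponential-equicontinuity hypothesis and the second by the $L^2$-bound $\|g\|_2 \leq \sqrt{M}$. Shrinking $\delta$ so that $(1-e^{-x\delta})\sqrt{M}$ is less than the target error completes this implication.

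For the converse I must control the bulk and boundary pieces separately. The bulk term $\int_s^\infty|g(t)-e^{-xs}g(t-s)|^2\,dt$ yields to the same triangle-inequality-plus-$L^2$-boundedness trick as above, in the reverse direction. For the boundary term I set $a_k := \int_{ks}^{(k+1)s}|g|^2\,dt$, so that $\sum_{k\geq 0} a_k = \|g\|_2^2 \leq M$. Factoring $|g(v+ks)|^2 - |g(v+(k+1)s)|^2$ as $(|g(v+ks)|-|g(v+(k+1)s)|)(|g(v+ks)|+|g(v+(k+1)s)|)$ and applying Cauchy--Schwarz together with the reverse triangle inequality, the $L^2((0,s))$-norm of the first factor is at most $\|g(\cdot)-g(\cdot+s)\|_{L^2(0,\infty)} < \eps$ and that of the second is at most $\sqrt{2M}$, giving $|a_k - a_{k+1}| \leq \eps\sqrt{2M}$ for every $k \geq 0$.

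Telescoping yields $a_0 \leq a_k + k\eps\sqrt{2M}$, and averaging over $k=1,\dots,N$ while using $\sum_k a_k \leq M$ produces $a_0 \leq M/N + \tfrac{N+1}{2}\,\eps\sqrt{2M}$, which is $O(\sqrt{\eps})$ once $N$ is chosen of order $\eps^{-1/2}$. Hence the boundary term tends to zero uniformly in $\Ffamily_x$ as $\eps \to 0$, and combining with the bulk estimate finishes the proof. I expect this last step to be the main obstacle: since (\ref{L2equicont}) integrates only over $(0,\infty)$, it does not \emph{a priori} see the boundary mass $\int_0^s|g|^2$, so extracting its smallness forces both the equicontinuity hypothesis and the $L^2$-boundedness to be used essentially via the telescoping/averaging trick above.
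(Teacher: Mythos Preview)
Your argument is correct. The decomposition into the ``bulk'' piece $\int_s^\infty |g(t)-e^{-xs}g(t-s)|^2\,dt$ and the ``boundary'' piece $\int_0^s|g|^2\,dt$, together with the triangle-inequality/$L^2$-boundedness manoeuvre for the bulk, is exactly what the paper does (up to cosmetic differences in where the factors $e^{\pm xs}$ are placed).

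The genuine divergence is in the boundary term for the implication $L^2$-equicontinuity of $\Ffamily_x \Rightarrow$ exponential $L^2$-equicontinuity. The paper simply invokes absolute continuity of the Lebesgue integral to get $\int_0^s e^{-2xt}|f(t)|^2\,dt<\eps$ for $s$ small; this is immediate for a \emph{fixed} $f$ but the paper does not explain why the threshold $\delta$ can be chosen uniformly over $f\in\Ffamily$ (and $L^2$-boundedness alone does not suffice, as the family $g_n=\sqrt{n}\,\mathds{1}_{(0,1/n)}$ shows). Your telescoping/averaging argument fills precisely this gap: it extracts the uniform smallness of $a_0=\int_0^s|g|^2$ from the two hypotheses that are actually available, namely $L^2$-equicontinuity and the bound $\sum_k a_k\leqslant M$. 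So your route is not just different but in fact more complete on this point; the cost is a slightly weaker final estimate ($O(\sqrt{\eps})$ rather than $O(\eps)$), which is of course immaterial.
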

\begin{proof}
Since $\Ffamily_x$ is $L^2-$bounded, there exists $M>0$ such that 
$$\forall_{f\in\Ffamily}\ \left(\int_0^{\infty}\ e^{-2xt}|f(t)|^2\ dt\right)^{\frac{1}{2}} < M.$$

\noindent
We divide the proof of the lemma into two parts:

\vspace{0.3cm}
\noindent
\textbf{Part 1.}

In the first part of the proof, we assume that the family $\Ffamily$ is exponentially $L^2-$equicontinuous at $x$. We fix $\eps>0$ and choose $\delta > 0$ such that 
\begin{itemize}
	\item (\ref{L2equicont}) is satisfied, and
	\item for every $s \in (0,\delta)$ we have
		\begin{equation}
		\int_0^s\ e^{-2xt}|f(t)|^2\ dt < \eps,
		\label{choiceofdelta2}
		\end{equation}

		\noindent
		which is possible due to Theorem 8 in \cite{Natanson}, p. 148, and
	\item for every $s \in (0,\delta)$ we have
	\begin{gather}
	\left|e^{-xs} - 1\right|M < \eps.
	\label{choiceofdelta3}
	\end{gather}
\end{itemize}

\noindent
Consequently, for every $s\in(0,\delta)$ and $f\in\Ffamily$ we have
\begin{equation*}
\begin{split}
&\left(\int_0^{\infty}\ e^{-2xt}|f(t) - f(t-s)|^2\ dt\right)^{\frac{1}{2}} = \left(\int_{-s}^{\infty}\ e^{-2x(t+s)}|f(t+s) - f(t)|^2\ dt\right)^{\frac{1}{2}} \\
&\leqslant \left(\int_{-s}^{\infty}\ e^{-2x(t+s)}|f(t+s) - e^{xs}f(t)|^2\ dt\right)^{\frac{1}{2}} + \left(\int_{-s}^{\infty}\ e^{-2x(t+s)}|e^{xs}f(t) - f(t)|^2\ dt\right)^{\frac{1}{2}} \\
&\leqslant \bigg(\int_{-s}^0\ e^{-2x(t+s)}|f(t+s)|^2\ dt + \int_0^{\infty}\ e^{-2x(t+s)}|f(t+s) - e^{xs}f(t)|^2\ dt\bigg)^{\frac{1}{2}} \\
&+ \left(\int_0^{\infty}\ e^{-2x(t+s)}|f(t)|^2|e^{xs} - 1|^2\ dt\right)^{\frac{1}{2}}\\
&\leqslant \bigg(\int_0^s\ e^{-2xt}|f(t)|^2\ dt + \int_0^{\infty}\ |e^{-x(t+s)}f(t+s) - e^{-xt}f(t)|^2\ dt\bigg)^{\frac{1}{2}} \\
&+ |e^{-xs} - 1| M \stackrel{(\ref{L2equicont}), (\ref{choiceofdelta2}), (\ref{choiceofdelta3})}{\leqslant} (2\eps)^{\frac{1}{2}} + \eps.
\end{split}
\end{equation*}

\noindent
Since $\eps> 0$ was chosen arbitrarily, the above estimates end the first part of the proof.

\vspace{0.4cm}
\noindent
\textbf{Part 2.}

In this part of the proof, we assume that $\Ffamily_x$ is $L^2-$equicontinuous. Again, we fix $\eps> 0$ and let $\delta > 0$ be such that 
\begin{itemize}
	\item (\ref{expL2equicontinuity}) is satisfied, and
	\item for every $s\in(0,\delta)$ we have
	\begin{gather}
	\left|1-e^{xs}\right| M < \eps \hspace{0.4cm}\text{and}\hspace{0.4cm} e^{xs} \leqslant 2.
	\label{choosingdelta}
	\end{gather}
\end{itemize}

\noindent
For every $s\in(0,\delta)$ and $f\in\Ffamily$ we have
\begin{equation*}
\begin{split}
&\left(\int_0^{\infty}\ \left|e^{-x(t+s)}f(t+s) - e^{-xt}f(t)\right|^2\ dt\right)^{\frac{1}{2}} = \left(\int_s^{\infty}\ \left|e^{-xt}f(t) - e^{-x(t-s)}f(t-s)\right|^2\ dt\right)^{\frac{1}{2}} \\
&\leqslant \left(\int_s^{\infty}\ \left|e^{-xt}f(t) - e^{-x(t-s)}f(t)\right|^2\ dt\right)^{\frac{1}{2}} + \left(\int_s^{\infty}\ \left|e^{-x(t-s)}f(t) - e^{-x(t-s)}f(t-s)\right|^2\ dt\right)^{\frac{1}{2}} \\
&\leqslant \left|1 - e^{xs}\right| \left(\int_0^{\infty}\ e^{-2xt}|f(t)|^2\ dt\right)^{\frac{1}{2}} + e^{xs}\eps \stackrel{(\ref{choosingdelta})}{\leqslant} \left|1 - e^{xs}\right| M + 2\eps < 3\eps,
\end{split}
\end{equation*}

\noindent
which ends the proof.
\end{proof}

We will now study the relationship between the exponential $L^2-$equicontinuity and the Laplace equivanishing. First, let us recall the Minkowski inequality:

\begin{thm}(Minkowski integral inequality, comp. \cite{FollandRealAnalysis}, p. 194 or \mbox{\cite{SteinSingular}, p. 271})\\
Let $(X,\mu_X),\ (Y,\mu_Y)$ be $\sigma-$finite measure spaces, $1\leqslant p < \infty$ and let \mbox{$F:X\times Y \longrightarrow \complex$} be a measurable function. Then
\begin{gather}
\left(\int_X\ \left(\int_Y\ |F(x,y)|\ dy\right)^p\ dx\right)^{\frac{1}{p}} \leqslant \int_Y\ \left(\int_X\ |F(x,y)|^p\ dx\right)^{\frac{1}{p}}\ dy.
\label{Minkineq}
\end{gather}
\end{thm}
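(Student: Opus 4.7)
The plan is to split the argument into the trivial case $p=1$ and the main case $1 < p < \infty$. For $p=1$, the inequality is in fact an equality: applying Tonelli's theorem to the nonnegative measurable function $|F|$ on $X \times Y$ directly equates the two iterated integrals, so nothing further is needed beyond $\sigma$-finiteness of the factors.

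For $1 < p < \infty$ the strategy is the standard duality argument. Writing $G(x) := \int_Y |F(x,y)|\, d\mu_Y(y)$ (allowed, a priori, to take the value $+\infty$), one exploits the representation
\[
\|G\|_{L^p(X)} = \sup\left\{ \int_X G(x) h(x)\, d\mu_X(x) \ : \ h \geq 0, \ \|h\|_{L^q(X)} \leq 1 \right\},
\]
where $q$ is the conjugate exponent determined by $\frac{1}{p} + \frac{1}{q} = 1$. For any such admissible $h$, I would insert the definition of $G$, swap the order of integration by Tonelli's theorem (which applies since the integrand $|F(x,y)| h(x)$ is nonnegative and both spaces are $\sigma$-finite), and then apply H\"older's inequality to the inner $x$-integral to bound
\[
\int_X |F(x,y)| h(x)\, d\mu_X(x) \leq \left(\int_X |F(x,y)|^p\, d\mu_X(x)\right)^{\frac{1}{p}} \|h\|_{L^q(X)}.
\]
Integrating this estimate in $y$ and taking the supremum over $h$ delivers (\ref{Minkineq}).

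The only subtle point is to ensure that the formal manipulations are legitimate when one of the sides of (\ref{Minkineq}) is infinite. If the right-hand side equals $+\infty$ the inequality is trivial, so one may assume it is finite; in that case the function $y \mapsto \|F(\cdot,y)\|_{L^p(X)}$ is measurable (measurability of $|F|^p$ on the product together with Tonelli applied to $|F|^p$ guarantees this), its $y$-integral is well-defined, and the chain of inequalities above both forces $\|G\|_{L^p(X)} < \infty$ and yields the claimed bound. I do not expect a genuine obstacle: once the $\sigma$-finiteness hypothesis is used to justify Tonelli and to invoke the duality characterization of the $L^p$ norm, the argument is essentially automatic.
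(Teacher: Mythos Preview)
The paper does not supply its own proof of this statement; it merely quotes the result with references to Folland and Stein, so there is no in-paper argument to compare against. Your duality proof is the standard one (essentially the argument in Folland): Tonelli for $p=1$, and for $1<p<\infty$ the converse-to-H\"older representation of $\|G\|_{L^p(X)}$ as a supremum over nonnegative $h$ with $\|h\|_{L^q(X)}\leqslant 1$, followed by Tonelli and H\"older in the inner integral. The reasoning is correct; the $\sigma$-finiteness hypothesis is exactly what is needed both for Tonelli and for the validity of the $L^p$--$L^q$ duality formula even when $\|G\|_{L^p(X)}$ is a priori infinite, so your handling of the edge cases is fine.
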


\begin{thm}
Let $\Ffamily$ be a Laplace-Pego family with a common order $x\geqslant 0.$ Exponential $L^2-$equicontinuity at $x$ implies Laplace equivanishing at $x$. Furthermore, if $\Ffamily_x$ is $L^2-$bounded, then the implication can be reversed. 
\label{expoequicontLaplaceequivanish}
\end{thm}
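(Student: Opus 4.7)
The backbone of both directions is the Plancherel identity (Theorem \ref{Planchereltheorem}) combined with the shift behavior of the Laplace transform. Setting $g_s(t):=f(t-s)\mathds{1}_{(s,\infty)}(t)$, a change of variables yields $\Laplace\{f-g_s\}(x+iy)=\bigl(1-e^{-(x+iy)s}\bigr)\Laplace\{f\}(x+iy)$. Since $(g_s)_x(t)=e^{-xs}f_x(t-s)\mathds{1}_{t>s}$ is a scaled translate of $f_x$ and thus lies in $L^1\cap L^2$, the function $f-g_s$ is Laplace-Pego of order $x$, and applying Plancherel produces the master identity
\begin{gather*}
\int_0^{\infty} e^{-2xt}|f(t)-f(t-s)|^2\,dt=\frac{1}{2\pi}\int_{-\infty}^{\infty}\bigl|1-e^{-(x+iy)s}\bigr|^2\bigl|\Laplace\{f\}(x+iy)\bigr|^2\,dy,\quad(\ast)
\end{gather*}
valid for every $s>0$ and $f\in\Ffamily$, with the convention $f(t-s):=0$ for $t<s$.

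For Part 1, fix $\eps>0$ and pick $\delta>0$ from (\ref{expL2equicontinuity}). The plan is to average $(\ast)$ over $s\in(0,\delta)$ and interchange the order of integration by Tonelli's theorem, yielding
$$\frac{1}{2\pi}\int_{-\infty}^{\infty}\Bigl(\frac{1}{\delta}\int_0^{\delta}|1-e^{-(x+iy)s}|^2\,ds\Bigr)|\Laplace\{f\}(x+iy)|^2\,dy<\eps^2.$$
Expanding $|1-e^{-(x+iy)s}|^2=1+e^{-2xs}-2e^{-xs}\cos(ys)$ and using the elementary estimate $\bigl|\int_0^{\delta}e^{-(x-iy)s}\,ds\bigr|\leq 2/|y|$ (which follows from $|1-e^{-(x-iy)\delta}|\leq 2$ and $|x-iy|\geq|y|$), the averaged weight satisfies $\frac{1}{\delta}\int_0^{\delta}|1-e^{-(x+iy)s}|^2\,ds\geq 1-\frac{4}{\delta|y|}\geq \tfrac12$ whenever $|y|\geq T:=8/\delta$. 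Consequently $\int_{|y|>T}|\Laplace\{f\}(x+iy)|^2\,dy\leq 4\pi\eps^2$, which is Laplace equivanishing after rescaling $\eps$.

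For Part 2, additionally assume $\Ffamily_x$ is $L^2$-bounded, so Plancherel furnishes $M>0$ with $\frac{1}{2\pi}\int_{-\infty}^{\infty}|\Laplace\{f\}(x+iy)|^2\,dy\leq M^2$ for all $f\in\Ffamily$. Given $\eps>0$, pick $T>0$ from (\ref{Laplaceequivanishing}) and split the right-hand side of $(\ast)$ at $|y|=T$. On the tail $|y|>T$ the trivial bound $|1-e^{-(x+iy)s}|^2\leq (1+e^{-xs})^2\leq 4$ together with Laplace equivanishing controls the contribution uniformly in $s$. On the core $|y|\leq T$ the identity $1-e^{-(x+iy)s}=(x+iy)\int_0^{s}e^{-(x+iy)u}\,du$ yields $|1-e^{-(x+iy)s}|\leq s\sqrt{x^2+T^2}$, which combined with the $L^2$-bound produces a contribution at most $s^2(x^2+T^2)M^2$. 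Choosing $\delta$ with $\delta^2(x^2+T^2)M^2<\eps^2$ then closes this direction.

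The delicate point is the lower bound in Part 1. For any fixed shift $s$, the weight $|1-e^{-(x+iy)s}|^2$ becomes arbitrarily small (and, when $x=0$, actually vanishes) along the arithmetic progression $ys=2\pi k$, so no single shift can uniformly separate $|\Laplace\{f\}(x+iy)|^2$ on the tail $\{|y|>T\}$. The averaging over $s\in(0,\delta)$ is precisely what smears out these oscillations and converts the oscillatory weight into a uniform positive lower bound for $|y|$ large; pinning down the correct quantifiers and constants in this averaging step is where I expect the bulk of the work to lie.
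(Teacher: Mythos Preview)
Your argument is correct. Part~2 is essentially the paper's own argument: both split the right-hand side of $(\ast)$ at $|y|=T$, bound the weight uniformly on the tail, and use continuity of $s\mapsto 1-e^{-(x+iy)s}$ on the core together with the $L^2$-bound.

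Part~1, however, is a genuinely different route. The paper introduces a continuous mollifier $g$ with $\supp(g)\subset(0,\delta)$ and $\int g=1$, invokes the Riemann--Lebesgue lemma (Theorem~\ref{RiemannLebesguelemma}) to find $T$ with $|\Laplace\{g\}(x+iy)|\leqslant\tfrac12$ on $\reals\setminus[-T,T]$, then uses the convolution theorem (Theorem~\ref{convolutiontheorem}) and Minkowski's integral inequality to reduce $\|\Laplace\{f\}-\Laplace\{f\star g\}\|_2$ to an average of the equicontinuity moduli. You instead average the identity $(\ast)$ itself over $s\in(0,\delta)$, swap integrals via Tonelli, and compute the averaged weight $\tfrac1\delta\int_0^\delta|1-e^{-(x+iy)s}|^2\,ds$ explicitly to extract a uniform lower bound $\tfrac12$ for $|y|\geqslant 8/\delta$. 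Your approach is more self-contained---it bypasses Riemann--Lebesgue, the convolution theorem, and Minkowski entirely, and even yields the explicit threshold $T=8/\delta$---while the paper's approximate-identity framework is closer to the classical Pego argument and would transplant more directly to settings where such explicit weight computations are unavailable. Morally the two are cousins: your uniform averaging is the choice $g=\tfrac1\delta\mathds{1}_{(0,\delta)}$ applied at the level of squared norms rather than norms.
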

\begin{proof}
We divide the proof into two parts:

\vspace{0.4cm}
\noindent
\textbf{Part 1.} 

We assume that the family $\Ffamily$ is exponentially $L^2-$equicontinuous at $x$, so for a fixed $\eps > 0$ we can choose $\delta > 0$ according to the exponential $L^2-$equicontinuity (\ref{expL2equicontinuity}). Let $g$ be a compactly supported, continuous function on $\reals_+$, satisfying the following conditions:
\begin{itemize}
	\item $g$ is nonnegative, and
	\item $\supp(g)\subset (0,\delta),$ and
	\item $\int_0^{\infty}\ g(s)\ ds = 1.$
\end{itemize}

\noindent
Naturally, $g$ is a Laplace-Pego function of order $x.$

By Theorem \ref{RiemannLebesguelemma}, let $T>0$ be such that 
\begin{gather}
\forall_{y\in[-T,T]}\ |\Laplace\{g\}(x+iy)| \leqslant \frac{1}{2}.
\label{choiceofTRiemannLebesgue}
\end{gather}

\noindent
Consequently, we have
\begin{equation*}
\begin{split}
\forall_{f\in\Ffamily}\ \left(\int_{\reals\backslash[-T,T]}\ |\Laplace\{f\}(x+iy)|^2\ dy\right)^{\frac{1}{2}} &\leqslant \bigg(\int_{\reals\backslash[-T,T]}\ \bigg|\Laplace\{f\}(x+iy)\big(1 - \Laplace\{g\}(x+iy)\big)\bigg|^2\ dy\bigg)^{\frac{1}{2}}\\
&+ \left(\int_{\reals\backslash[-T,T]}\ |\Laplace\{f\}(x+iy)\Laplace\{g\}(x+iy)|^2\ dy\right)^{\frac{1}{2}}\\
\stackrel{(\ref{choiceofTRiemannLebesgue})}{\leqslant} \bigg(\int_{\reals\backslash[-T,T]}\ \bigg|\Laplace\{f\}(x+iy)\big(1 - \Laplace\{g\}(x&+iy)\big)\bigg|^2\ dy\bigg)^{\frac{1}{2}} + \frac{1}{2}\left(\int_{\reals\backslash[-T,T]}\ |\Laplace\{f\}(x+iy)|^2\ dy\right)^{\frac{1}{2}},
\end{split}
\end{equation*}

\noindent
which implies 
\begin{equation*}
\begin{split}
\forall_{f\in\Ffamily}\ \bigg(\int_{\reals\backslash[-T,T]}\ &|\Laplace\{f\}(x+iy)|^2\ dy\bigg)^{\frac{1}{2}} \leqslant 2\left(\int_{\reals\backslash[-T,T]}\ \bigg|\Laplace\{f\}(x+iy)\big(1 - \Laplace\{g\}(x+iy)\big)\bigg|^2\ dy\right)^{\frac{1}{2}} \\
&\stackrel{\text{Theorem } \ref{convolutiontheorem}}{\leqslant} 2 \left(\int_{\reals}\ |\Laplace\{f\}(x+iy) - \Laplace\{f\star g\}(x+iy)|^2\ dy\right)^{\frac{1}{2}} \\
&\stackrel{\text{Theorem } \ref{Planchereltheorem}}{=} 2\sqrt{2\pi} \left(\int_0^{\infty}\ e^{-2xt} |f(t) - f\star g(t)|^2\ dt\right)^{\frac{1}{2}} \\
&= 2\sqrt{2\pi} \left(\int_0^{\infty}\ e^{-2xt} \left|f(t) - \int_0^{\infty}\ f(t-s)g(s)\ ds\right|^2\ dt\right)^{\frac{1}{2}} \\
&= 2\sqrt{2\pi} \left(\int_0^{\infty}\ \left| \int_0^{\infty}\ e^{-xt} (f(t) - f(t-s))g(s)\ ds\right|^2\ dt\right)^{\frac{1}{2}} \\
&\stackrel{\text{Minkowski ineq.}}{\leqslant} 2\sqrt{2\pi} \int_0^{\infty}\ \left(\int_0^{\infty}\ e^{-2xt} |f(t) - f(t-s)|^2 |g(s)|^2\ dt\right)^{\frac{1}{2}}\ ds \\
&= 2\sqrt{2\pi} \int_0^{\infty}\ g(s) \left(\int_0^{\infty}\ e^{-2xt} |f(t) - f(t-s)|^2\ dt\right)^{\frac{1}{2}}\ ds \\
&= 2\sqrt{2\pi} \int_0^{\delta}\ g(s) \left(\int_0^{\infty}\ e^{-2xt} |f(t) - f(t-s)|^2\ dt\right)^{\frac{1}{2}}\ ds \\
&\leqslant 2\sqrt{2\pi} \eps \int_0^{\delta}\ g(s)\ ds = 2\sqrt{2\pi} \eps.
\end{split}
\end{equation*}

\noindent
Let us remark that the use of Minkowski inequality in hte above estimates is justified, because the function $F(t,s) = e^{-2xt} |f(t) - f(t-s)|^2 |g(s)|^2$ is measurable due to \mbox{Proposition 3.9}, p. 86 in \cite{SteinShakarchi}. Since $\eps >0$ was chosen arbitrarily, the above estimates end the first part of the proof. 

\vspace{0.3cm}
\noindent
\textbf{Part 2.}

For this part of the proof, we assume that $\Ffamily_x$ is $L^2-$bounded, so there exists $M_1>0$ such that 
\begin{gather}
\forall_{f\in\Ffamily}\ \int_0^{\infty}\ e^{-2xt}|f(t)|^2\ dt < M_1.
\label{M1exists}
\end{gather}

\noindent
We will show that if $\Ffamily$ is Laplace equivanishing at $x$ then it is exponentially $L^2-$equicontinuous at $x.$ For convenience, we denote $T_{-s}f(t) = f(t-s).$ We observe the following equalities 
\begin{equation}
\begin{split}
\forall_{\substack{s > 0\\ f\in\Ffamily}}\ \int_0^{\infty}\ e^{-2xt}|f(t)-f(t-s)|^2\ dt &= \int_0^{\infty}\ e^{-2xt}|f(t) - T_{-s}f(t)|^2\ dt \\
&\stackrel{\text{Theorem } \ref{Planchereltheorem}}{=} \frac{1}{2\pi}\ \int_{-\infty}^{\infty}\ |\Laplace\{f-T_{-s}f\}(x+iy)|^2\ dy \\
&= \frac{1}{2\pi}\ \int_{-\infty}^{\infty}\ |\Laplace\{f\}(x+iy) - e^{-s(x+iy)}\Laplace\{f\}(x+iy)|^2\ dy \\
&= \frac{1}{2\pi}\ \int_{-\infty}^{\infty}\ \left|1-e^{-s(x+iy)}\right|^2 |\Laplace\{f\}(x+iy)|^2\ dy.
\end{split}
\label{equalitywithTs}
\end{equation}

Fix $\eps>0$ and choose $T>0$ according to Laplace equivanishing (\ref{Laplaceequivanishing}). Let $\delta >0$ be such that 
\begin{gather}
\forall_{\substack{s\in(0,\delta)\\ y\in[-T,T]}}\ \left|1- e^{-s(x+iy)}\right|^2 < \eps,
\label{choiceofdelta4}
\end{gather}

\noindent
and put 
\begin{gather}
M_2 = \max_{\substack{s \in [0,\delta],\\ y\in\reals}}\ \left|1-e^{-s(x+iy)}\right|^2.
\label{choiceofM2}
\end{gather}

\noindent
Finally, for every $s\in(0,\delta)$ and $f\in\Ffamily$, we have
\begin{equation*}
\begin{split}
\int_0^{\infty}\ e^{-2xt}|f(t) - f(t-s)|^2\ dt \stackrel{(\ref{equalitywithTs})}{=}& \frac{1}{2\pi}\ \int_{-T}^T\ \left|1-e^{-s(x+iy)}\right|^2 |\Laplace\{f\}(x+iy)|^2\ dy \\
+& \frac{1}{2\pi}\ \int_{\reals\backslash [-T,T]}\ \left|1-e^{-s(x+iy)}\right|^2 |\Laplace\{f\}(x+iy)|^2\ dy \\
\stackrel{(\ref{choiceofdelta4}),\ (\ref{choiceofM2})}{\leqslant} \frac{\eps}{2\pi}\ \int_{-T}^T\ |\Laplace\{f\}(x+iy)|^2\ dy +& \frac{M_2}{2\pi}\ \int_{\reals\backslash [-T,T]}\ |\Laplace\{f\}(x+iy)|^2\ dy \\
\stackrel{\text{Theorem } \ref{Planchereltheorem},\ (\ref{Laplaceequivanishing})}{\leqslant} \eps\ \int_0^{\infty}\ e^{-2xt}|f(t)|^2\ dt +& \frac{M_2}{2\pi}\ \eps \leqslant \left(M_1 + \frac{M_2}{2\pi}\right) \eps.
\end{split}
\end{equation*}

\noindent
Since $\eps>0$ was chosen arbitrarily, we conclude the proof.
\end{proof}

Before we present the final theorem of the paper, let us recall the celebrated Riesz-Kolmogorov theorem:

\begin{thm}(Riesz-Kolmogorov theorem, comp. \cite{HancheOlsenHolden})\\
A family $\Afamily \subset L^2(\reals_+)$ is relatively compact if and only if 
\begin{itemize}
	\item $\Afamily$ is $L^2-$bounded, and
	\item $\Afamily$ is $L^2-$equicontinuous, and
	\item $\Afamily$ is $L^2-$equivanishing. 
\end{itemize}
\label{KolmogorovRiesz}
\end{thm}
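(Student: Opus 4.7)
The plan is to prove both directions separately. The necessity direction (relative compactness implies the three properties) is a routine finite-net argument: I would fix $\eps > 0$, extract a finite $\frac{\eps}{3}$-net $\{f_1,\ldots,f_n\} \subset L^2(\reals_+)$ for $\Afamily$ from total boundedness, and then propagate the three properties from the individual $f_i$'s (each of which satisfies $L^2$-continuity of translation and $L^2$-tail decay, via density of $C_c$ in $L^2$) to the entire family by a three-term triangle inequality. $L^2$-boundedness is immediate from total boundedness.

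The sufficiency direction is where the real work sits. Fix $\eps > 0$. My first step is to use $L^2$-equivanishing to choose $T > 0$ so that the truncations $f \cdot \mathds{1}_{(0,T)}$ approximate each $f$ within $\eps$ in $L^2$, uniformly over $\Afamily$. My second step is to select a smooth mollifier $\rho_\delta$ supported in a small neighborhood of the origin and invoke $L^2$-equicontinuity to ensure $\|\rho_\delta \star f - f\|_2 < \eps$ uniformly over $\Afamily$. My third step is to observe that the mollified family $\{\rho_\delta \star f\}_{f\in\Afamily}$, restricted to $[0,T]$, is pointwise uniformly bounded and uniformly Lipschitz, with constants controlled by Cauchy-Schwarz via $\|f\|_2\,\|\rho_\delta\|_2$ and $\|f\|_2\,\|\rho_\delta'\|_2$ respectively, hence uniform in $f$ thanks to $L^2$-boundedness of $\Afamily$. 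Classical Arzel\`a-Ascoli on $[0,T]$ then delivers a finite $\eps$-net in $C([0,T])$, which embeds continuously into $L^2((0,T))$. Combining the three approximations yields a finite $O(\eps)$-net for $\Afamily$ in $L^2(\reals_+)$, establishing total boundedness and, since the ambient space is complete, relative compactness.

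The main obstacle is maintaining uniformity in $f \in \Afamily$ at each step of the sufficiency argument: the truncation parameter $T$, the mollification parameter $\delta$, and the Arzel\`a-Ascoli cover must all be chosen without reference to a particular $f$. The three hypotheses of the theorem are precisely what enable this, so the proof becomes a careful orchestration of classical ingredients rather than a new computation. A minor technical wrinkle is that $\reals_+$ is a half-line rather than a full group, so convolution with a symmetric mollifier requires extending $f$ by zero to $\reals$; this extension preserves the three hypotheses and the $L^2$-norm, and thus causes no genuine difficulty.
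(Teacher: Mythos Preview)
The paper does not actually prove this theorem: it is stated as a classical result with a reference to \cite{HancheOlsenHolden} and then invoked as a black box in the proof of Theorem~\ref{LaplacePego}. Your proposal, by contrast, sketches the standard proof of the Kolmogorov--Riesz theorem (finite-net argument for necessity; truncation, mollification, and Arzel\`a--Ascoli for sufficiency), and the sketch is correct, including your remark about extending functions by zero to handle convolution on the half-line. So there is nothing to compare against here --- you have supplied a proof where the paper simply cites one.
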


The final theorem, which is the climax of the paper, should be juxtaposed with Pego theorem in \cite{Gorka}, \cite{GorkaKostrzewa} and \cite{Pego}. 

\begin{thm}
Let $\Ffamily$ be the family consisting of Laplace-Pego functions with a common order $x$ and such that $\Ffamily_x$ is $L^2-$bounded. The family $\Ffamily_x$ is relatively compact in $L^2(\reals_+)$ if and only if 
\begin{itemize}
	\item $\Ffamily$ is Laplace equicontinuous at $x$, and
	\item $\Ffamily$ is Laplace equivanishing at $x$.
\end{itemize}
\label{LaplacePego}
\end{thm}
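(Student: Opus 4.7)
The plan is to reduce the theorem to the classical Riesz-Kolmogorov criterion (Theorem \ref{KolmogorovRiesz}) applied directly to $\Ffamily_x \subset L^2(\reals_+)$, and then translate each of the two non-trivial conditions (equicontinuity and equivanishing of $\Ffamily_x$) into the corresponding Laplace-side condition via the structural lemmas proved in the preceding section. Since $\Ffamily_x$ is assumed $L^2$-bounded, the Riesz-Kolmogorov theorem gives that $\Ffamily_x$ is relatively compact in $L^2(\reals_+)$ if and only if $\Ffamily_x$ is simultaneously $L^2$-equicontinuous and $L^2$-equivanishing. Thus it suffices to prove the two equivalences:
\begin{gather*}
\Ffamily_x \text{ is } L^2\text{-equivanishing} \iff \Ffamily \text{ is Laplace equicontinuous at } x,\\
\Ffamily_x \text{ is } L^2\text{-equicontinuous} \iff \Ffamily \text{ is Laplace equivanishing at } x.
\end{gather*}

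For the first equivalence, I would simply observe that by the very definition given in (\ref{expoL2equivanishing}), $\Ffamily$ being exponentially $L^2$-equivanishing at $x$ coincides with $\Ffamily_x$ being $L^2$-equivanishing in the classical sense. Then Theorem \ref{LaplaceequicontinuityexpoL2equvanishing}, together with the standing assumption that $\Ffamily_x$ is $L^2$-bounded, immediately gives the two-sided implication between exponential $L^2$-equivanishing of $\Ffamily$ at $x$ and Laplace equicontinuity of $\Ffamily$ at $x$.

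For the second equivalence, I would chain two results. First, Lemma \ref{expoequiandequicontinuity} (whose hypothesis is exactly the $L^2$-boundedness of $\Ffamily_x$) gives that $\Ffamily_x$ is $L^2$-equicontinuous if and only if $\Ffamily$ is exponentially $L^2$-equicontinuous at $x$. Then Theorem \ref{expoequicontLaplaceequivanish}, again invoking $L^2$-boundedness of $\Ffamily_x$ for the reverse implication, gives that exponential $L^2$-equicontinuity of $\Ffamily$ at $x$ is equivalent to Laplace equivanishing of $\Ffamily$ at $x$.

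Combining these two equivalences with Riesz-Kolmogorov yields the stated characterization. I do not expect any genuine obstacle here: all the analytic work has already been done in Theorems \ref{LaplaceequicontinuityexpoL2equvanishing} and \ref{expoequicontLaplaceequivanish} and in Lemma \ref{expoequiandequicontinuity}. The only point that requires any care is to verify that the hypothesis \emph{$\Ffamily_x$ is $L^2$-bounded} is exactly the hypothesis needed to make each of those three results run in \emph{both} directions; without it, only one implication in the final theorem would survive. This bookkeeping, and the observation that the definition of exponential $L^2$-equivanishing of $\Ffamily$ is literally the definition of $L^2$-equivanishing applied to $\Ffamily_x$, are the only substantive points of the argument.
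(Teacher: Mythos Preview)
Your proposal is correct and follows essentially the same route as the paper: reduce to Riesz--Kolmogorov for $\Ffamily_x$, then use Theorem~\ref{LaplaceequicontinuityexpoL2equvanishing} to match $L^2$-equivanishing of $\Ffamily_x$ with Laplace equicontinuity, and Lemma~\ref{expoequiandequicontinuity} together with Theorem~\ref{expoequicontLaplaceequivanish} to match $L^2$-equicontinuity of $\Ffamily_x$ with Laplace equivanishing. The only cosmetic difference is that you package the argument as two full equivalences up front, whereas the paper splits it into the two implications separately; the logical content and the use of the $L^2$-boundedness hypothesis are identical.
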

\begin{proof} The proof is divided into two parts:

\vspace{0.3cm}
\noindent
\textbf{Part 1.}

We assume that $\Ffamily$ is Laplace equicontinuous and equivanishing at $x$. At first, we note that Laplace equicontinuity of $\Ffamily$ at $x$ implies that this family is exponentially $L^2-$equivanishing at $x$ (Theorem \ref{LaplaceequicontinuityexpoL2equvanishing}). In other words, $\Ffamily_x$ is $L^2-$equivanishing. 

Furthermore, Laplace equivanishing of $\Ffamily$ at $x$ implies that this family is exponentially $L^2-$equicontinuous (Theorem \ref{expoequicontLaplaceequivanish}). In other words, $\Ffamily_x$ is $L^2-$equicontinuous (Lemma \ref{expoequiandequicontinuity}). By Theorem \ref{KolmogorovRiesz}, we conclude that $\Ffamily_x$ is relatively compact in $L^2(\reals_+)$.

\vspace{0.3cm}
\noindent
\textbf{Part 2.}

For the second part of the proof, we assume that $\Ffamily_x$ is relatively compact in $L^2(\reals_+)$. By Theorem \ref{KolmogorovRiesz}, the family $\Ffamily_x$ is $L^2-$equicontinuous and $L^2-$equivanishing. 

$L^2-$equicontinuity of $\Ffamily_x$ implies that $\Ffamily$ is Laplace equivanishing at $x$ (Lemma \ref{expoequiandequicontinuity} and Theorem \ref{expoequicontLaplaceequivanish}). Moreover, $L^2-$equivanishing of $\Ffamily_x$ implies that $\Ffamily$ is Laplace equicontinuous at $x$ (Theorem \ref{LaplaceequicontinuityexpoL2equvanishing}), which ends the proof. 
\end{proof}

\end{document}